\newtheorem{theorem}{Theorem}[section]
\newtheorem{lemma}[theorem]{Lemma}
\newtheorem{proposition}[theorem]{Proposition}
\theoremstyle{definition}
\theoremstyle{remark}
\def\R{{\mathbb R}}
\def\N{{\mathbb N}}
\def\Om{{\Omega}}
\def\span{\hbox{span}}
\numberwithin{equation}{section}
\title{Optimal transportation between hypersurfaces bounding some strictly convex domains}
\author{E. Humbert and L. Molinet}
\begin{document}

\begin{abstract} 
 Let $M,N$ be two smooth compact hypersurfaces of $\R^n$ which bound strictly convex domains equipped with two absolutely continuous measures $\mu$ and $\nu$ (with respect to the volume measures of $M$ and $N$). We consider the optimal transportation from $\mu$ to $\nu$ for the quadratic cost. Let $(\phi:m \to \R,\psi:N \to \R)$ be some functions which  achieve the  supremum in the Kantorovich formulation of the problem and which  satisfy
 $$ \psi (y)  = \inf_{z\in M} \Bigl( \frac{1}{2}|y-z|^2 -\varphi(z)\Bigr);  \\
                 \varphi (x)=\inf_{z\in N} \Bigl( \frac{1}{2}|x-z|^2 -\psi(z)\Bigr).$$
Define for $y \in N$, 
$$\varphi^\Box(y) = \sup_{z\in M} \Bigl( \frac{1}{2}|y-z|^2 -\varphi(z)\Bigr).$$
                 In this short paper, we exhibit a relationship between the regularity of $\varphi^\Box$ and the existence of a solution to the Monge problem. 
                 \end{abstract}

\maketitle
 Let 
 $M$ and $N$ be two smooth compact hypersurfaces of $\R^n$, $n \geq 2$, which are the boundary of some strictly convex domains. In the present paper, we study the existence of a solution of Monge Problem when considering the optimal transport with quadratic cost between two measures $\mu$ and $\nu$ respectively supported in $M$ and $N$. This situations has been already studied: see  \cite{GMC}.
In the whole paper, we assume that $\mu$ and $\nu$ have the form $\mu= f(x) dv(x)$ and $\nu=g(y) dv(y)$ where $f,g$ are some non-zero nonnegative continuous functions on $M$ and $N$ and where $dv$ stands for the volume measures on $M$ and $N$. 
The quadratic cost is defined for all $x,y \in \R^n$ by 
$c_2(x,y):=\frac{1}{2} |x  - y|^2$.
Here, $| \cdot|$ denotes the standard norm associated to the canonical scalar product of $\R^n$. For all $x,y \in \R^n$, the scalar product of $x$ and $y$ will be  denoted by $x \cdot y$.

\noindent  The standard formulation of the optimal transport from $\mu$ to $\nu$ for the quadratic cost is   
$$T_0:= \inf_{\pi \in \Pi'(\mu,\nu)} I_0'(\pi)$$
where $\Pi'(\mu',\nu')$ is the set of probabilily measures $\pi(\cdot,\cdot)$ on $\R^n \times \R^n$ such that
$$\pi(\R^n, \cdot) = \nu \hbox{ and } \pi( \cdot, \R^n)= \mu$$
and for all  $\pi \in \Pi'(\mu',\nu')$, 
$$ I_0'(\pi):= \int_{\R^n \times \R^n} c_2(x,y) d\pi(x,y).$$  
As easily checked, this is an equivalent formulation to write 
\begin{eqnarray} \label{T=inf} 
 T_0:= \inf_{\pi \in \Pi(\mu,\nu)} I_0(\pi)
\end{eqnarray}
where $\Pi(\mu,\nu)$ is the set of probabilily measures $\pi(\cdot,\cdot)$ on $M \times N$ such that
$$\pi(M, \cdot) = \nu \hbox{ and } \pi( \cdot, N)= \mu$$
and for all  $\pi \in \Pi(\mu,\nu)$, 
$$ I_0(\pi):= \int_{M \times N} c_2(x,y) \, d\pi(x,y).$$

\noindent By the Monge-Kantorovich duality (see for instance, \cite{K}, \cite{V}), one has 
\begin{eqnarray} \label{T=sup}
 T_0= \sup_{(\varphi, \psi) \in \Omega} J_0(\varphi,\psi)
\end{eqnarray}
where $\Omega$ is the set of couples of functions $(\varphi, \psi) \in C^0(M) \times C^0(N)$ such that 
$$\varphi(x)  + \psi(y) \leq  c_2(x,y) $$ 
for all $(x,y) \in M \times N$ and   where 
$$J_0(g,h)= \int_{M} g(x) d\mu(x) + \int_{N} h(y) d\nu(y).$$ 
for all $(g,h)\in C^0(M) \times C^0(N)$.

\noindent Actually, in many situations, one can also show the uniqueness of $\pi$ (see \cite{AKMC}). 

\noindent It is standard to prove that (see for instance \cite{AKMC} for references): 
\begin{enumerate} 
 \item the infimum in (\ref{T=inf}) is attained by some probability measure $\pi$, which is called a transference plan; 
 \item the supremum in (\ref{T=sup}) is attained by some couple of functions $(\varphi,\psi)$ which satisfy for all $x\in M$, $y \in N$:
 \end{enumerate}

\begin{equation} \label{mnconvex}
 \psi (y)  = \inf_{z\in M} \Bigl( c_2(z,y) -\varphi(z)\Bigr);  \\
                 \varphi (x)=\inf_{z\in N} \Bigl( c_2(x,z) -\psi(z)\Bigr)
   \end{equation}
In the whole paper, if $\eta:M \to \R$, we will note for all $y \in N$ 
\begin{eqnarray} \label{def*}
 \eta^* (y)  = \inf_{x\in M} \Bigl( c_2(x,y) -\eta(x)\Bigr). 
\end{eqnarray}
In the same way, if 
$\eta :N \to \R$, we will note for all $x \in M$ 
\begin{eqnarray} \label{def*2}
 \eta^* (x)  = \inf_{y\in N} \Bigl( c_2(x,y) -\eta(y)\Bigr),
\end{eqnarray}
so that $\varphi^*= \psi$ and $\psi^*= \varphi$.  \\

\noindent An important question about   \eqref{T=inf} is the following  : are the transference plans associated with \eqref{T=inf} supported in a graph ? Indeed, a positive answer to this question would ensure the existence of a solution to the famous Monge problem (see, for instance, \cite{V} for some explanations).
Unfortunately, the answer is no in its full generality. Gangbo and McCann \cite{GMC} could construct some counter examples. Even worse: numerical computations indicate that this is not true either in the simplest situation when $M=N=S^1$ (for $n \geq 1$, $S^n$ denotes the unit sphere of $\R^{n+1}$) and when $\mu$ and $\nu$ have smooth positive densities (see \cite{CMCN}).

  In this paper we try to give some simple criteria that would imply a positive answer. Before, stating our result, we need to introduce some definitions.   \\


\noindent For any fonction $\Theta:M \to \R$, we  set  for all $y \in N$ 
\begin{eqnarray}
 \Theta^\Box (y)  = \sup_{x\in M} \Bigl( c_2(x,y) -\Theta(x)\Bigr).
\end{eqnarray}
and in the same way, if 
$\Theta:N \to \R$, we   set   for all $x \in M$ 
\begin{eqnarray}
 \Theta^\Box(x)  = \sup_{y\in N} \Bigl( c_2(x,y) -\Theta(x)\Bigr).
\end{eqnarray}

 \noindent While, as proven is \cite{GMC}, the function $\varphi$ is always  $C^1$, the function $\varphi^\Box$ has no reason to be $C^1$ in general but surprisingly, its regularity is directly related to the question above. 
More precisely, our main result is 

\begin{theorem} \label{main}
 For all $y \in N$, define $\Theta_y:M \to \R$ by 
 $$\Theta_y (x) =  c_2(x,y) -\varphi(x)$$
 so that, for $y \in N$, $\varphi^*(y) = \inf_M \Theta_y$ and $\varphi^\Box(y)= \sup_M \Theta_y$. 
 Then, the following assertions are equivalent:
 \begin{enumerate}
  
  \item $\varphi^\Box$ is $C^1$;
  \item for all $y \in N$, the function $\Theta_y$ has exactly two critical points: its minimum and it maximum.  
 \end{enumerate}
If one of the  assertions  above is true then $\varphi^{\Box \Box} = \varphi^{**}= \varphi$. Moreover, the support of $\pi$ is contained in a graph. 
\end{theorem}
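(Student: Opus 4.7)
\medskip

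\noindent\textbf{Proof plan.}
The easy direction is $(2)\Rightarrow(1)$: an envelope theorem. Under (2) the function $\Theta_y$ has a unique maximiser $x_{\max}(y)\in M$ for each $y\in N$, and compactness of $M$ combined with uniqueness forces $y\mapsto x_{\max}(y)$ to be continuous: if $y_n\to y$ and $x_{\max}(y_n)\to x_\infty$ along a subsequence, then $x_\infty$ must also maximise $\Theta_y$, hence equal $x_{\max}(y)$. Because $y\mapsto \Theta_y(x)$ is smooth for each fixed $x$ and the maximiser is unique, the envelope formula reads, for $v\in T_yN$,
\[
 d\varphi^\Box(y)\cdot v \;=\; \bigl(y-x_{\max}(y)\bigr)\cdot v .
\]
The right-hand side is continuous in $y$, so $\varphi^\Box\in C^1(N)$.

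For $(1)\Rightarrow(2)$ I would begin with the convex-analytic observation that, extending $\varphi^\Box$ to $\R^n$ by $\hat\varphi^\Box(y):=\sup_{x\in M}(c_2(x,y)-\varphi(x))$, the function $\hat\varphi^\Box(y)-\tfrac12|y|^2$ is convex, being a supremum of affine functions of $y$. At each $y$ its subdifferential contains $-x^*$ for every argmax $x^*\in M$ of $\Theta_y$. The $C^1$ hypothesis on $\varphi^\Box|_N$ then forces every such $x^*$ to have the same tangential projection onto $T_yN$, so all argmaxes lie on a common normal line to $N$ at $y$; by strict convexity of $M$'s domain such a line meets $M$ in at most two points. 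What remains --- and I expect this to be the real obstacle --- is to exclude both (a) two distinct argmaxes and (b) any critical point of $\Theta_y$ other than its argmin and argmax. My plan is a deformation argument: given a forbidden critical point $x_0$ at some $y_0$, continue it via the implicit function theorem along a curve $y(t)\subset N$ chosen by exploiting the strict convexity of $N$, and drive the value $\Theta_{y(t)}(x(t))$ up to meet $\varphi^\Box(y(t))$; at the crossing time, use the strict convexity of $M$ to extract a jump in $d\varphi^\Box$, contradicting (1).

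The remaining statements are short consequences. The identity $\varphi^{**}=\varphi$ is built into the Kantorovich duality hypotheses \eqref{mnconvex}, since $\varphi=\psi^*=(\varphi^*)^*$. For $\varphi^{\Box\Box}=\varphi$, one has $\varphi^{\Box\Box}(x)\le\varphi(x)$ for free from the inequality $\varphi^\Box(y)\ge c_2(x,y)-\varphi(x)$, with equality whenever $x$ arises as $x_{\max}(y)$ for some $y\in N$; surjectivity of $y\mapsto x_{\max}(y)$ onto $M$ would then follow by a continuity/degree argument using the topology $M,N\cong S^{n-1}$ together with strict convexity. Finally, under (2) the argmin of $\Theta_y$ is unique, and the support of the optimal plan $\pi$ is contained in $\{(x,y):\varphi(x)+\psi(y)=c_2(x,y)\}=\{(x_{\min}(y),y):y\in N\}$, which is a graph.
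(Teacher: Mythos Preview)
Your $(2)\Rightarrow(1)$ argument via the envelope theorem is correct and matches the paper's. The identifications $\varphi^{**}=\varphi$ and $\varphi^{\Box\Box}\le\varphi$, and the graph conclusion from uniqueness of the argmin of $\Theta_y$, are also fine.

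The genuine gap is in $(1)\Rightarrow(2)$. Your deformation plan cannot get started: $\varphi$ is only known to be $C^1$ (this is the Gangbo--McCann regularity invoked at the outset), so $\Theta_y$ is merely $C^1$ and there is no Hessian with which to apply the implicit function theorem and continue a critical point. Even granting extra smoothness, nothing rules out degenerate critical points, and neither the mechanism for ``driving the value up to $\varphi^\Box$'' nor the claimed jump in $d\varphi^\Box$ at the crossing time is justified; the continued branch could simply merge with the argmax branch without ever producing two distinct maximisers. You also have not closed case~(a): knowing that all maximisers lie on a normal line to $N$ that meets $M$ in at most two points does not by itself prevent both intersection points from being maximisers.

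The paper's route is quite different and bypasses these analytic issues. It sets $\Gamma:=\{x\in M:\ x\ \text{maximises some}\ \Theta_y\}$ and proves, with no hypothesis, that the inverse-argmax map $T:\Gamma\to N$ is well defined, continuous, surjective, and satisfies $n_M(x)\cdot n_N(T(x))<0$ for every $x\in\Gamma$. This sign comes from the \emph{monotonicity} inequality $(x-x')\cdot(y-y')\le 0$ on the contact set of $(\varphi,\psi)$, and it is precisely the ingredient missing from your case~(a): under hypothesis~(1), two points $x,x'$ with $T(x)=T(x')=y$ lie on your normal line $D^\Box_y$ \emph{and} both obey the negative-sign condition, which strict convexity of $M$ permits for only one of the two intersection points. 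Hence $T$ is injective, hence a homeomorphism $\Gamma\to N\cong S^{n-1}$, and Jordan--Brouwer separation forces $\Gamma=M$. Then any critical point $x''$ of $\Theta_y$ has $y\in D_{x''}$, and since $x''\in\Gamma=M$ the two points of $D_{x''}\cap N$ are exactly those for which $x''$ is respectively the maximum and the minimum of the corresponding $\Theta$; so $x''$ must be one or the other, and uniqueness of each follows from the same sign dichotomy. Your degree/topology instinct is thus the right tool, but in the paper it is the engine that drives $(1)\Rightarrow(2)$, not a postscript for $\varphi^{\Box\Box}=\varphi$.
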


 \noindent The idea of the proof is as follows. Let 
 $\Gamma$ be the set of points $x \in M$ which are the maximum of a function $\Theta_y$ for some $y \in N$. There is then two crucial observations: 
 \begin{enumerate} 
  \item If $(x,y)$ belongs to the support of $\pi$ and if $x \in \Gamma$, then $y$ is unique. This implies that if $\Gamma=M$ then the support of $\pi$ is contained in a graph.
  \item Let $x \in \Gamma$,  $y \in N$ such that $x$ is the maximum of $\Theta_y$. Then, $y$ is  unique.  This allows to define a map 
 $T: \Gamma \to N$ such that $T(x)=y$. The main argument of the proof is to show that under the assumptions of Theorem \ref{main}, $T$ is actually a homeomorphism, which implies that $\Gamma=M$ and allows to conclude.
 \end{enumerate}
 
 \noindent 
 Even if assumptions 1) or 2) are not easy to check, we think that this theorem gives a new point of view, that we hope useful, to this Monge problem. 
 For convenience of the reader, we stated all the results which seem of particular interest to us in Propositions \ref{main1}, \ref{main2}, \ref{main3}. Theorem  \ref{main} is a direct consequence of these propositions. 
\section{Proof of Theorem \ref{main}} 
\subsection{Notations and Preliminaries}
 We  keep the notations of the introduction: $(\varphi, \psi)$ is a couple of functions maximizing the problem (\ref{T=sup}). By Gangbo and McCann \cite{GMC}, these functions are $C^1$. Indeed, in their paper Section 3, they show that the convex functions they study are tangentially differentiable and that these tangent differentials are continuous on $M$ and $N$. Here, the function we consider are the same functions restricted to $M$ and $N$ and are hence $C^1$. Notice that the proof of this fact is far to be obvious.  In addition, the functions $\varphi,\psi$ satisfy $\varphi^*=\psi$ and $\psi^* = \varphi$. 
We recall that for all $y \in N$, we defined $\Theta_y:M \to \R$ by 
 $$\Theta_y (x) =  c_2(x,y) -\varphi(x).$$
 In the same way, if $x \in M$, we define $\Theta_x:N \to \R$ by
 $$\Theta_x (x) =  c_2(x,y) -\psi(x).$$
For $x \in M$, $y \in N$, we introduce the sets:
$$\Om_x:= \{ z\in N \, /  \theta_x(z)=\inf_{z'\in N} \theta_x(z')=\varphi^* (z)= \psi(z) \}$$
and  
$$\Om_y:= \{ z\in M \, /  \theta_y(z)=\inf_{z'\in M} \theta_y(z')=\psi^* (z)= \varphi(z) \}.$$
Note that, by compacity of $M$ and $N$,  these sets are non empty. 
Note also that 
$$\Om_x:= \{ z\in N \, /  \varphi(x) + \psi(z) = c_2(x,z) \} \; \hbox{ and } \; \Om_y:= \{ z\in M \, /  \varphi(z) + \psi(y) = c_2(z,y) \}$$
which has the immediate consequence that 
\begin{eqnarray} \label{equivom}
 y\in  \Omega_x \Leftrightarrow x\in \Omega_y \Leftrightarrow \varphi(x) + \psi(y) = c_2(x,y).
\end{eqnarray}

\noindent Now, proving that the support of $\pi$ is contained in a graph of a continuous preserving map $\alpha:M \to N$ (resp. $\alpha:N \to M$) is reduced to proving that for all
$x \in M$ (resp. $y \in N$), the set $\Om_x$ (resp. $\Om_y$) contains exactly one point. 
Indeed, by (\ref{T=inf}) and (\ref{T=sup}), one has
$$\int_{M \times N} c_2(x,y) d\pi(x,y) = \int_M \varphi(x) d\mu(x) + \int_N \psi(y) d\nu (y)$$
which can be rewritten, since the marginals of $\pi$ are $\mu$ and $\nu$, by

$$ \int_{M \times N} \left(c_2(x,y) - \varphi(x) - \psi(y) \right)  d\pi(x,y) = 0.$$
Since $c_2(x,y) - \varphi(x) - \psi(y) \geq 0$, one has identically on the support of $\pi$:
$$c_2(x,y) = \varphi(x) + \psi(y)$$
and hence $x \in \Om_y$ or $y \in \Om_x$. \\

\noindent For any $ y\in N $ we denote by $ n_N(y) $ the unitary normal outside vector to $ N $ at $ y$ and we define the line $ D_y $ by $$ D_y=y-\nabla \psi(y)+\span(n_N(y)) \; .$$ 
Similarly, for $ x\in M $ we define the line $ D_x $ by 
$$D_x=x-\nabla \varphi(x)+\span(n_M(x)) \; .$$
An easy computation of the derivative of $\Theta_y$ and $\Theta_x$ shows that if $x \in M, y \in N$, then 

\begin{eqnarray} \label{critical}
y \in D_x \Leftrightarrow \nabla \Theta_y(x) = 0 \; \hbox{ and } \; 
x \in D_y \Leftrightarrow \nabla \Theta_x(y)= 0.
\end{eqnarray}

\noindent Finally, we will use several times the following Lemma:  
\begin{lemma} \label{nonconstant}
 For all $x \in M$, $y \in N$, the functions $\Theta_x$ and $\Theta_y$ are never constant. 
\end{lemma}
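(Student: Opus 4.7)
The plan is to argue by contradiction, using only the $C^1$ regularity of $\varphi$ and $\psi$ recalled from Gangbo-McCann \cite{GMC}. The cases of $\Theta_y$ and $\Theta_x$ are symmetric, so I write out the former. Assume $\Theta_y \equiv c$ on $M$ for some $y \in N$ and some constant $c$. Then
$$\varphi(x) = \tfrac{1}{2}|x-y|^2 - c \qquad \text{for every } x \in M.$$
Plugging this into the identity $\psi = \varphi^*$ from \eqref{def*} and expanding the squares gives, for every $y' \in N$,
$$\psi(y') = h(y - y') + \tfrac{1}{2}(|y'|^2 - |y|^2) + c, \qquad h(w) := \inf_{x \in M} x\cdot w,$$
so $\psi$ is the sum of a smooth quadratic and the function $y' \mapsto h(y - y')$, where $h$ is positively $1$-homogeneous, continuous, and concave (it is essentially minus the support function of the compact set $M$).

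The heart of the argument is then to show that $\psi$ cannot be $C^1$ at $y$. Pick a non-zero tangent vector $v \in T_y N$ (available since $n \geq 2$, so $N$ is a hypersurface of positive dimension), and a smooth curve $\gamma:(-\varepsilon,\varepsilon) \to N$ with $\gamma(0) = y$ and $\gamma'(0) = v$, so that $y - \gamma(t) = -tv + O(t^2)$. Using the positive $1$-homogeneity and continuity of $h$, the right and left derivatives of $t \mapsto h(y - \gamma(t))$ at $t = 0$ compute respectively to
$$h(-v) = -\sup_{x \in M} x\cdot v \qquad \text{and} \qquad -h(v) = -\inf_{x \in M} x\cdot v.$$
Because $M$ bounds a strictly convex domain with non-empty interior, $M$ is not contained in any affine hyperplane, so $x \mapsto x\cdot v$ is non-constant on $M$ whenever $v \neq 0$; the two one-sided limits therefore differ. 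The smooth quadratic correction $\tfrac{1}{2}|y'|^2$ in the formula for $\psi$ cannot absorb this jump, so $\psi \circ \gamma$ fails to be differentiable at $0$, contradicting $\psi \in C^1(N)$.

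The case of $\Theta_x$ for $x \in M$ is handled by the completely symmetric argument: constancy of $\Theta_x$ would force $\psi$ to coincide on $N$ with a quadratic, so that $\varphi = \psi^*$ would acquire a support-function term in the variable $x - x'$ whose one-sided directional derivatives at $0$ disagree, contradicting $\varphi \in C^1(M)$. The main point of the proof, if any, is to recognize the right reduction: constancy of $\Theta_y$ collapses $\varphi$ onto a quadratic, forcing $\psi$ to split as an affine function plus a positively homogeneous function of $y - y'$, whose one-sided slopes at the origin are the two extreme values of a nontrivial linear form on $M$. Everything else is routine.
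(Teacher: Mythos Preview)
Your argument is correct, but it is considerably more elaborate than the paper's. The paper dispatches the lemma in two lines using the machinery already set up in \eqref{equivom} and \eqref{critical}: if $\Theta_y$ is constant then every $x\in M$ is a minimum of $\Theta_y$, so $x\in\Omega_y$, hence by \eqref{equivom} $y\in\Omega_x$, hence $y$ is a critical point of $\Theta_x$, hence by \eqref{critical} $x\in D_y$; thus the whole hypersurface $M$ sits inside the single line $D_y$, which is absurd. Both proofs ultimately rely on the $C^1$ regularity of $\psi$ from \cite{GMC}: the paper uses it to \emph{define} the line $D_y$, while you use it as the statement to be contradicted.

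Your route has the virtue of being self-contained (it does not invoke \eqref{equivom} or \eqref{critical}) and of making explicit exactly which geometric fact about $M$ is used --- namely that $M$ is not contained in any affine hyperplane, so that the support-function term $h$ has a genuine corner at the origin. The paper's route is much shorter because it leverages the critical-point characterisation \eqref{critical} that had to be recorded anyway for later use; once that is in hand, the contradiction ``$M\subset D_y$'' is immediate and requires no computation.
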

\begin{proof}
 Assume for instance that $\Theta_y$ is constant. Then for any $x \in M$, $x \in \Om_y$ and hence, by (\ref{equivom}), $y \in \Om_x$. By (\ref{critical}), $x \in D_y$ which implies that $M$ is contained in the right line $D_x$. This is impossible and Lemma \ref{nonconstant} follows.  
\end{proof}

\subsection{Properties of $\varphi^\Box$}
In the proof on Theorem \ref{main}, we will use some basic properties of $\varphi^{\Box}$. Many of them are very standard. We first recall its definition: 
$$\varphi^\Box(y)  = \sup_{x\in M} \Bigl( c_2(x,y) -\varphi(x)\Bigr).$$
 For convenience, for any $y \in N$, we will denote by $\Om_y^\Box$ the set of points of $M$ achieving the maximum in the definition of $\varphi^\Box$. 
 We collect the properties we will need in the following Proposition:
\begin{proposition} \label{varphibox}

 \begin{enumerate}
 \item For all $x\in M$, t$y \in N$, one has $\varphi(x) + \varphi^\Box(y)  \geq c_2(x,y)$ with equality if and only if $x \in \Om^\Box_y$; 
 \item $\varphi^{\Box \Box} \leq \varphi$;
 \item $\varphi^{\Box}$ is Lipschitz;
  \item $\varphi^{\Box \Box \Box} = \varphi^\Box$; 
  \item Let $y \in M$. Assume that there is only one point such $x_y$ such that 
  $$\varphi^\Box(x)  = c_2(x_y,y) -\varphi(x_y)$$
i.e. the supremum in the definition of $\varphi^\Box$ is attained at only one point, then $\varphi^\Box$ is differentiable at $y$.  
  \end{enumerate}
\end{proposition}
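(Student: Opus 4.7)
My approach treats Proposition \ref{varphibox} as a bundle of standard Fenchel-/envelope-type facts: items (1)--(4) are formal consequences of the definition of $\varphi^{\Box}$ together with the compactness of $M,N$, while item (5) is a Danskin-type differentiability result whose only non-trivial ingredient is the continuity of the argmax correspondence at a point where the maximizer is unique.

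For (1), plug an arbitrary $x\in M$ into the supremum defining $\varphi^\Box(y)$ to get $\varphi(x)+\varphi^\Box(y)\ge c_2(x,y)$, with equality iff $x$ realises the supremum, i.e.\ $x\in\Om_y^{\Box}$. For (2), rewrite (1) as $c_2(x,y)-\varphi^\Box(y)\le\varphi(x)$ and take the sup in $y\in N$ on the left to get $\varphi^{\Box\Box}(x)\le\varphi(x)$. For (3), each $y\mapsto\tfrac12|x-y|^2-\varphi(x)$ is Lipschitz in $y$ with constant $\le\max_{x\in M,\,y\in N}|x-y|<\infty$, so the pointwise supremum $\varphi^\Box$ is Lipschitz with the same constant. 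For (4), applying the argument of (2) to $\varphi^\Box$ in place of $\varphi$ (the proof only uses the abstract Fenchel inequality, which holds for any bounded function) yields $\varphi^{\Box\Box\Box}\le\varphi^\Box$; and inserting $\varphi^{\Box\Box}\le\varphi$ from (2) into the sup defining $\varphi^{\Box\Box\Box}$ gives $\varphi^{\Box\Box\Box}(y)=\sup_{x\in M}(c_2(x,y)-\varphi^{\Box\Box}(x))\ge\sup_{x\in M}(c_2(x,y)-\varphi(x))=\varphi^\Box(y)$.

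The main step is (5). Fix $y_0\in N$ with $\Om_{y_0}^\Box=\{x_0\}$. The key preliminary is to show that the argmax is continuous at $y_0$: if $y_k\to y_0$ in $N$ and $x_k\in\Om_{y_k}^\Box$, then $x_k\to x_0$. By compactness of $M$ any subsequence of $(x_k)$ admits a convergent sub-subsequence, say $x_k\to x_\infty\in M$; passing to the limit in the identity $\varphi^\Box(y_k)=c_2(x_k,y_k)-\varphi(x_k)$ and in the inequality $\varphi^\Box(y_k)\ge c_2(z,y_k)-\varphi(z)$ (for arbitrary $z\in M$), using continuity of $\varphi$ and of $\varphi^\Box$ (by (3)), identifies $x_\infty$ as a point of $\Om_{y_0}^\Box$, hence $x_\infty=x_0$.

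Given this, for $y\in N$ close to $y_0$ and any $x_y\in\Om_y^\Box$, a direct Taylor expansion of $c_2$ about $y_0$ in its second variable gives the two-sided sandwich
\begin{equation*}
(y_0-x_0)\cdot(y-y_0)+\tfrac12|y-y_0|^2\;\le\;\varphi^\Box(y)-\varphi^\Box(y_0)\;\le\;(y_0-x_y)\cdot(y-y_0)+\tfrac12|y-y_0|^2,
\end{equation*}
where the lower bound uses $\varphi^\Box(y)\ge c_2(x_0,y)-\varphi(x_0)$ together with $\varphi^\Box(y_0)=c_2(x_0,y_0)-\varphi(x_0)$, and the upper bound uses the symmetric inequality at $x_y$. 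Since $x_y\to x_0$ as $y\to y_0$ by the argmax continuity, both bounds differ from $(y_0-x_0)\cdot(y-y_0)$ by $o(|y-y_0|)$, which proves Fr\'echet differentiability of $\varphi^\Box$ at $y_0$, with tangential differential $v\mapsto(y_0-x_0)\cdot v$ on $T_{y_0}N$. The only real obstacle is the argmax continuity step, which crucially uses both compactness of $M$ and the uniqueness hypothesis; the rest of the argument is standard Taylor accounting.
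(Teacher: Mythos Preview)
Your proof is correct and follows essentially the same approach as the paper: items (1), (2), (4) are handled identically, and for (5) both you and the paper first establish continuity of the argmax at $y_0$ via compactness plus uniqueness, then squeeze $\varphi^\Box(y)-\varphi^\Box(y_0)$ between two first-order expressions coming from the maximizers at $y_0$ and at $y$. The only cosmetic difference is in (3): you appeal directly to the fact that a pointwise supremum of uniformly Lipschitz functions is Lipschitz, whereas the paper writes out the explicit two-sided estimate $\tfrac12(z+y-2x_y)\cdot(z-y)\ge\varphi^\Box(z)-\varphi^\Box(y)\ge\tfrac12(z+y-2x_z)\cdot(z-y)$ and then recycles it verbatim in the proof of (5); since you re-derive exactly that sandwich in your step (5), the content is the same.
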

\begin{proof} 
 \noindent We start by proving 1). Let $x \in M$ and $y \in N$. It holds that 
  
 \begin{eqnarray*}
  \begin{aligned} 
   \varphi(x) + \varphi^\Box(y) & = \varphi(x) + \sup_{z\in M} \Bigl( c_2(z,y)  -\varphi(z)\Bigr) \; \\
 & \geq  \varphi(x)  + (  c_2(x,y)  -\varphi(x))= c_2(x,y).
 \end{aligned}
 \end{eqnarray*}
 The inequality above becomes an equality if and only if $x \in \Om^\Box_y$. This proves 1). \\

 \noindent Let us now deal with 2). 
Let $x \in M$. By definition:
$$\varphi^{\Box \Box}(x) = \sup_{y \in N} (c_2(x,y) - \varphi^\Box(y)).$$
By compacity of $N$, there exists $y_x \in M$ such that 
 \begin{eqnarray} \label{boxbox2} 
  \varphi^{\Box \Box}(x) =  c_2(x,y_x) - \varphi^\Box(y_x).
 \end{eqnarray}
By definition, one also have
 $$\varphi^\Box(y_x) = \sup_{z \in M} (c_2(z,y_x) - \varphi(z)).$$
 and, setting $z=x$, one has 
$$ \varphi^{\Box} (y_x) \geq c_2(x,y_x) - \varphi(x).$$
Together with (\ref{boxbox2}), this gives 2). \\

 \noindent Let us prove 3). Let $y,z \in M$, $x_y \in \Om^\Box_y$ and $x_z \in \Om^\Box_z$. 
 We prove that 
 \begin{eqnarray} \label{lips} 
  \begin{split} 
 \frac{1}{2} (z+y - 2x_y) \cdot  (z-y) 
  &\geq \varphi^\Box(z) - \varphi^\Box(y)  \\ 
 & \geq \frac{1}{2} (z +y - 2x_z) \cdot (z -y).
 \end{split}
 \end{eqnarray}
 
\noindent The definition of $\Omega^\Box_z$ implies that 
 $$\varphi^\Box(z) = c_2(x_z,z) - \varphi(x_z).$$ 
 The construction of $\varphi^\Box$ implies that 
 $$ \varphi^\Box(y) \geq  c_2(x_z,y)  - \varphi(x_z).$$
 Observing that 
 $$\frac{1}{2} (z +y - 2 x_z) \cdot (z -y)= c_2(x_z,z) -c_2(x_z,y),$$
 this provides the right inequality of (\ref{lips}). The left inequality is proven in the same way. 
 Now, observe that since $M,N$ are compact, there exists a constant  $C >0$ independant of $y,z$ such that 
 $$\frac{1}{2} |z +y - 2x_z| \leq C \; \hbox{ and } \frac{1}{2} |z +y - 2x_y| \leq C.$$
 Using that
 $|z -y|$ is less  than the geodesic distance on $N$, we immediatly deduce that $\varphi^\Box$ is lipschitz. 
 Note that this implies that $\varphi^\Box$ is continuous and by Rademacher's Theorem, is differentiable almost everywhere.\\

 \noindent We now prove 4). 
 By Point 2)
 $$\varphi^{\Box \Box} \leq \varphi.$$  In particular, for all $y \in N$, 
 $$\varphi^{\Box \Box \Box}(y) = \sup_{x \in M} ( c_2(x,y) - \varphi^{\Box \Box} (x) ) \geq   
 \sup_{x \in M} (c_2(x,y)  - \varphi(x) )= \varphi^\Box(y).$$
For all $x \in M$, $y \in N$, we also have as in  Point 1) 
$$\varphi^{\Box \Box}(x) + \varphi^\Box(y) \geq c_2(x,y).$$ 
Then as in Point 2), 
$\varphi^{\Box \Box \Box} =(\varphi^{\Box})^{\Box\Box} \leq \varphi^\Box$. 
This shows 4). \\

\noindent Let us finish by proving 5). Let $y \in M$ and assume that $\Om^\Box_y$ is reduced to one point $x$. 
 Let $(z_k)$ be a sequence of points of $N$ tending to $y$. For all $k$, choose $x_k \in \Omega^\Box_{z_k}$. By compacity of $M$, one can assume that $x_k$ converges to some  $x' \in M$.  
 The definition of $\Omega^\Box_{z_k}$ and Point 1) implies that 
$$\varphi(x_k) + \varphi^\Box(z_k) = c_2(x_k,z_k).$$
\noindent By continuity of $\varphi$ and $\varphi^\Box$, we obtain as $z_k$ tends to $y$, 
$$\varphi(x') + \varphi^\Box(y) = c_2(x',y)$$
 which proves that $x' \in \Omega^\Box_y$ and hence $x'=x$
Using \eqref{lips}, we have 
\begin{eqnarray} \label{subdif1}
  \hspace*{10mm}  \frac{1}{2}(z_k-y) \cdot (z_k+y - 2 x_k)  \geq \varphi^\Box(z_k) - \varphi^\Box(y) \geq\frac{1}{2}(z_k-y) \cdot (z_k+y - 2 x).
 \end{eqnarray}
 Until the end of the proof, the notation $o_k$ will stand for a term which is $o(|z_k - y|)$. 
Since $x_k$ tends to $x$, we have 
$$x_k \cdot (z_k -y) = x \cdot (z_k -y) + o_k.$$
When $z_k$ is close to $y$,  
$$ z_k - y = P_y(z_k-y) + o_k,$$
where $P_y$ denotes the orthogonal projection onto the tangent space $T_yN$. 
Coming back to \eqref{subdif1}, we obtain that 
$$\frac{1}{2} P_y (z_k-y) \cdot (z_k +y - 2x)  + o_k \geq \varphi^\Box(z_k) - \varphi^\Box(y) \geq \frac{1}{2}P_y (z_k-y) \cdot (z_k +y - 2x)  + o_k .$$
Since $P_y$ is self-adjoint, this yields 
$$\frac{1}{2} (z_k-y) \cdot P_y  (z_k +y - 2x)  + o_k \geq \varphi^\Box(z_k) - \varphi^\Box(y) \geq \frac{1}{2} (z_k-y) \cdot P_y (z_k +y - 2x)  + o_k.$$
 Noticing that
$$ \lim_k P_y  (z_k +y - 2x) = 2 P_y (y  - x)$$ 
and setting  $v:= P_y (y  - x) $, it follows that
$$(z_k-y) \cdot  v + o_k \geq \varphi^\Box(z_k) - \varphi^\Box(y) \geq (z_k -y)\cdot  v  + o_k.$$
This ensures  that for any sequence $z_k$ tending to $y$, one can extract a subsequence such that 
$$   \varphi^\Box(z_k) - \varphi^\Box(y)  - v \cdot (z_k-y)= o_k.$$
Since when $z_k$ tends to $y$, $(z_k-y)$ is equivalent to the geodesic distance from $y$ to $z_k$ in $N$, this proves that $\varphi^\Box$ is differentiable and that $\nabla \varphi^\Box(y) = v$ which completes the proof of Proposition \ref{varphibox}. 
\end{proof}

\subsection{Proof of Theorem \ref{main}}

\noindent 
We define 
$$
\Gamma:=\{x\in M\, / \, \exists y\in N\, , \, \theta_y(x)=\sup_{x\in M} \theta_y(x)=\varphi^\Box (y) \} 
$$
 
\noindent The first observation is the following:

 \begin{proposition} \label{main1}  {\em (Properties of the set $\Gamma$)}  
 \begin{enumerate} 
  \item The set $\Gamma$ is closed;
  \item If $x \in \Gamma$ then $ \# \Omega_x=1 $.
In particular, if $\Gamma =M$,  the support of $\pi$ is contained in a graph. 
\item For all $x \in \Gamma$, one has $\varphi^{\Box \Box}(x) = \varphi(x) = \varphi^{**}(x)$. 
 \end{enumerate}
  \end{proposition}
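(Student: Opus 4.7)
The plan is to handle the three parts in order. For (1), I would realize $\Gamma$ as the image under the projection $M\times N \to M$ of the set $\{(x,y)\in M\times N : \Theta_y(x) = \varphi^\Box(y)\}$. Joint continuity of $\Theta_y(x) = c_2(x,y)-\varphi(x)$ together with continuity of $\varphi^\Box$ (Lipschitz by Proposition \ref{varphibox}.3) make this set closed in the compact product $M\times N$, and the projection of a closed set from a compact product onto a factor is closed. This step is routine.

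The main content lies in (2). Fix $x\in\Gamma$ and argue by contradiction, assuming $\Omega_x$ contains two distinct points $y_1,y_2$. By \eqref{equivom} this gives $x\in\Omega_{y_1}\cap\Omega_{y_2}$, i.e., $x$ is a minimizer of each $\Theta_{y_i}$ on the closed manifold $M$; consequently $\nabla\Theta_{y_i}(x)=0$, and \eqref{critical} forces $y_1,y_2\in D_x$. Because $x\in\Gamma$, there is also some $y_0\in N$ for which $x$ maximizes $\Theta_{y_0}$, so the same reasoning yields $y_0\in D_x$. The crucial observation is that $y_0$ cannot coincide with $y_1$ nor with $y_2$: if it did, then $x$ would be simultaneously a minimum and a maximum of $\Theta_{y_0}$, making this function constant on $M$ and contradicting Lemma \ref{nonconstant}. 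Hence $D_x\cap N$ contains the three distinct points $y_0,y_1,y_2$, which is impossible since $N$ bounds a strictly convex domain and a line meets such a hypersurface in at most two points. Thus $\#\Omega_x \leq 1$, and non-emptiness (already noted in Subsection 2.1) upgrades this to $\#\Omega_x=1$. The ``in particular'' clause then follows from the containment $\mathrm{supp}\,\pi\subset\{(x,y):y\in\Omega_x\}$ proved at the end of Subsection 2.1. The main obstacle here is verifying that the three candidate critical points are genuinely pairwise distinct and that they all lie on the single line $D_x$; Lemma \ref{nonconstant} is exactly what rules out the degenerate coincidence, after which strict convexity closes the argument.

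Finally, (3) is a short consequence of what has been collected. For $x\in\Gamma$ pick $y_0\in N$ with $x\in\Omega^\Box_{y_0}$. Proposition \ref{varphibox}.1) yields the equality $\varphi(x)+\varphi^\Box(y_0)=c_2(x,y_0)$, whence
\[
\varphi^{\Box\Box}(x) \;\geq\; c_2(x,y_0)-\varphi^\Box(y_0) \;=\; \varphi(x),
\]
while the reverse inequality $\varphi^{\Box\Box}\leq\varphi$ is Proposition \ref{varphibox}.2). The identity $\varphi^{**}=\varphi$ holds unconditionally since, by \eqref{mnconvex}, the maximizing couple satisfies $\varphi^*=\psi$ and $\psi^*=\varphi$.
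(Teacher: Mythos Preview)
Your proof is correct and follows essentially the same route as the paper: part~(1) is the same compactness argument packaged topologically rather than sequentially, and part~(2) is identical (three points on $D_x\cap N$, with Lemma~\ref{nonconstant} ruling out coincidences). For part~(3) your argument is in fact a slight streamlining: the paper detours through $\varphi^{\Box\Box\Box}=\varphi^\Box$ to obtain $\varphi^{\Box\Box}(x)\geq\varphi(x)$, whereas you get it directly from the definition of $\varphi^{\Box\Box}$ as a supremum evaluated at the witness $y_0$.
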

 \begin{proof}
Let us first show that $\Gamma$ is closed: let $(x_n) \subset \Gamma$ be such that $ x_n\to x $ in $ M$.  There exists $(y_n) \subset N $ such that for all $ n\in \N$, 
$ \Theta_{y_n}(x_n)=\max_{M} \Theta_{y_n} $. Now, let $ (y_{n_k}) $ be a subsequence of $ (y_n) $ that converges to some $ y\in N $. Such subsequence exists by compactness of $N$.  On the one hand, by the continuity of the map $z\mapsto \max_{M} \Theta_z $, we obtain that $ \Theta_{y_{n_k}}(x_{n_k})=\max_{M} \theta_{y_n} \to 
 \max_{M} \Theta_y $. On the other hand, we have  $ \Theta_{y_{n_k}}(x_{n_k}) \to  \Theta_y(x)$. Therefore $ \Theta_y(x)=\max_{M} \Theta_y $ and thus 
  $ x\in \Gamma$. This proves that $\Gamma$ is closed. \\
  
  \noindent Les us come to the proof of the second part of the statement. Let $x \in \Gamma$. By definition of $\Gamma$, there exists $y_1 \in N$ such that $x$ is a maximum for $\Theta_{y_1}$.    
Assume that $\# \Omega_x \geq 2$ and let also $y_2, y_3 \in  \Om_x $, $y_2 \not= y_3$. Then, $x \in \Om_{y_i}$ for $i=2,3$ and then  $x$ is a minimum of  $\Theta_{y_i}$ $i=2,3$.  By Equation (\ref{critical}),
$y_1,y_2, y_3 \in D_x$. Since $N$ is the boundary of a strictly convex domain, $D_x$ intersects $N$ at at most two points. Since $y_2 \not= y_3$, we must have $y_1=y_2$ or $y_1=y_3$. Let us assume for instance that $y_1 = y_2$. This means that $x$ is a minimum as well as a maximum of $\Theta_{y_1}$ which forces $\Theta_{y_1}$ to be constant on $M$.  By Lemma  \ref{nonconstant}, this cannot occur. \\

\noindent Let us prove now the third part of the statement. For all $x \in M$, it holds that $\varphi^{**}(x)= \psi^*(x) = \varphi(x)$. By Proposition \ref{varphibox}, $\varphi^{\Box \Box} \leq \varphi$. 
 It thus remains to prove that if $x \in \Gamma$ then $\varphi^{\Box \Box}(x) \geq \varphi(x)$. For such $x$, there exists $ y \in N$ such that 
\begin{eqnarray*}
\frac{|x-y|^2}{2} - \varphi(x) 
& = & \varphi^\Box (y) =\varphi^{\Box\Box\Box}(y)\\
& = &  \sup_{z\in M} \Bigl( \frac{|z-y|^2}{2} - \varphi^{\Box\Box}(z)\Bigr) \\
& \ge & \frac{|x-y|^2}{2} - \varphi^{\Box\Box}(x) \\
& \ge & \frac{|x-y|^2}{2} - \varphi(x) 
\end{eqnarray*}
Here, we used the fact that $\varphi^{\Box \Box \Box}= \varphi^{\Box}$, which is proven in Proposition \ref{varphibox}.  We then must have equality in all the inequalities above which implies $\varphi^{\Box \Box}(x) \geq \varphi(x)$.
 \end{proof}
 
\noindent We are now in position to define  $$
\begin{array}{rcl}
T \; :\; \Gamma & \to & N \\
 x& \mapsto & T(x)
 \end{array}
 $$
  such that $\Theta_{T(x)} (x) = \sup_M \Theta_{T(x)} $.
 Then, 
 
\begin{proposition} \label{main2}  {\em (Properties of the mapping $T$)}  
$T$ is a well defined continuous map which is surjective. Moreover, for all $x \in \Gamma$, the outer unit normal vector to $M$ at $ x$  and 
 to $ N $ at $T(x)$ satisfy:
  $$n_M(x) \cdot n_N(T(x)) <0.$$
 
  \end{proposition}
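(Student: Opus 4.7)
The plan is to establish the four conclusions in sequence, with the normal inequality carrying the real content.

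For well-definedness, I suppose $x\in\Gamma$ admits two distinct $y_1,y_2\in N$ with $\Theta_{y_i}(x)=\max_M\Theta_{y_i}$. Then \eqref{critical} places both $y_i$ in $D_x$, and Proposition~\ref{main1}(2) yields a unique $y'\in\Omega_x$, which also lies in $D_x$ since $x$ is a critical point of $\Theta_{y'}$. Strict convexity of $N$ forces $|D_x\cap N|\le 2$, so $y'\in\{y_1,y_2\}$; but then $x$ is simultaneously a minimum and a maximum of $\Theta_{y'}$, making $\Theta_{y'}$ constant, which contradicts Lemma~\ref{nonconstant}.

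Continuity and surjectivity are standard compactness arguments. Given $x_n\to x$ in $\Gamma$, extract a convergent subsequence $T(x_{n_k})\to y\in N$; passing to the limit in the identity $\Theta_{T(x_{n_k})}(x_{n_k})=\max_M\Theta_{T(x_{n_k})}$, using joint continuity of $(z,w)\mapsto\Theta_z(w)$ and of $z\mapsto\max_M\Theta_z$, yields $\Theta_y(x)=\max_M\Theta_y$, so uniqueness from Step~1 forces $y=T(x)$. For surjectivity, any $y\in N$ admits a maximizer $x$ of $\Theta_y$ on the compact set $M$, and such an $x$ lies in $\Gamma$ with $T(x)=y$.

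The main obstacle is the normal inequality. Let $x\in\Gamma$, $y=T(x)$ and $\{y'\}=\Omega_x$; both lie in $D_x\cap N$ and are distinct (otherwise $\Theta_y$ would be constant, contradicting Lemma~\ref{nonconstant}). Parameterize $D_x$ by $p(\lambda)=x-\nabla\varphi(x)+\lambda\, n_M(x)$ and write $y=p(\lambda_y)$, $y'=p(\lambda_{y'})$. The crux is that $x$ is simultaneously a maximum of $\Theta_y$ and a minimum of $\Theta_{y'}$ on $M$, so $\Theta_y-\Theta_{y'}$ attains its maximum on $M$ at $x$. A direct expansion gives the affine identity
\[ \Theta_y(z)-\Theta_{y'}(z)=(y'-y)\cdot z + C = (\lambda_{y'}-\lambda_y)\,n_M(x)\cdot z + C, \]
so that $(\lambda_{y'}-\lambda_y)\,n_M(x)\cdot(z-x)\le 0$ for every $z\in M$. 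Strict convexity of $M$ ensures $n_M(x)\cdot(z-x)<0$ for all $z\in M\setminus\{x\}$, which forces $\lambda_{y'}>\lambda_y$. Consequently the chord $[y,y']$ of the strictly convex domain bounded by $N$ is traversed in the direction $+n_M(x)$ as $\lambda$ increases from $\lambda_y$ to $\lambda_{y'}$, so $+n_M(x)$ points strictly into this domain at the entry point $y$, which is exactly $n_M(x)\cdot n_N(T(x))<0$.
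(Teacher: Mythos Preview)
Your proofs of well-definedness, continuity, and surjectivity follow the paper's almost verbatim (the only cosmetic difference is that you invoke Proposition~\ref{main1}(2) to have $\Omega_x$ a singleton, whereas the paper just picks any $y_3\in\Omega_x$; neither choice matters).

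For the normal inequality $n_M(x)\cdot n_N(T(x))<0$, however, you take a genuinely different route. The paper introduces an auxiliary point $x'\in\Omega_y$ and invokes the \emph{monotonicity inequality} $(x-x')\cdot(y-y')\le 0$, derived from the four relations $\varphi+\psi\le c_2$ with equality on the pairs $(x,y')$ and $(x',y)$; it then argues by contradiction on the sign of $\lambda$ in $\overrightarrow{yy'}=\lambda\,n_M(x)$ using strict convexity of $M$. Your argument instead exploits the observation that $\Theta_y-\Theta_{y'}$ is affine in $z$ (the $\varphi$'s cancel), so that its maximum over $M$ at $x$ directly forces $\lambda_{y'}>\lambda_y$ via the strict convexity of $M$; the desired sign then drops out of the supporting-hyperplane inequality $(y'-y)\cdot n_N(y)<0$ for the strictly convex domain bounded by $N$. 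Your approach is more self-contained---it avoids introducing $x'$ and proving monotonicity, and uses only the max/min characterization of $y,y'$ already in hand. The paper's argument, on the other hand, is the classical one from Gangbo--McCann \cite{GMC} and makes explicit the link with the cyclical monotonicity structure underlying optimal transport.
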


\begin{proof} 
To show that $T$ is well defined, we have to show that for all $x \in \Gamma$, there exists one and only one $y \in N$ such that $\Theta_{y} (x) = \sup_M \Theta_{y}$. The existence of such $y$ is ensured by the fact that $x \in \Gamma$. 
Now, assume that $y_1$ and $y_2$ satisfy this relation. Then, $y_1$ and $y_2$ must belong to the right line $D_x$ (see Relation (\ref{critical})). Moreover,  since $ \Omega_x $ is never empty, let $y_3 \in \Omega_x$ then again $y_3 \in D_x$. Notice that $y_3$ is distinct from $y_1$ and $y_2$ otherwise $\Theta_{y_1}$ is constant which is prohibited by Lemma \ref{nonconstant}. Since $D_x$ intersects $N$ at at most two points, $y_2$ and $y_3$ must be equal. This prove that $T$ is well defined. \\

\noindent The fact that $ T $ is surjective is obvious: if $y \in N$, we choose $x \in M$, which is compact, such that $x$ is a maximum of $\Theta_y$. The definition of $T$ implies that $T(x) = y$. Let us show the continuity of $T$. 
Let $(x_n) \subset \Gamma$ such that $ x_n\to x $ in $ \Gamma$.  By construction we have  
$ \theta_{T(x_n)}(x_n)=\max_{M} \theta_{T(x_n)} $. Now, let $ (x_{n_k}) $ be a subsequence of $ (x_n) $ such that $ T(x_{n_k}) $ converges to some $ y\in N $. Obviously, proceeding as in the proof of Proposition \ref{main1}, $ \Theta_y(x)=\max_{M} \Theta_y $ and thus $ y=T(x) $. Therefore $ T(x) $ is the unique adherence  point of the sequence $ (T(x_n) )$ which 
ensures that $ T(x_n) \to T(x) $ and proves the continuity of $ T$. \\

\noindent Let us prove the last part of the statement of Proposition \ref{main2}. Let $x \in \Gamma$ and set $y=T(x)$. Since $x$ a maximum for $\Theta_y$, by (\ref{critical}), $y \in D_x$. Let also $y'$ be a minimum for $\Theta_x$ i.e. $y' \in \Om_x$. By (\ref{equivom}), $x$ is also a minimum for $\Theta_{y'}$ which implies, by (\ref{critical}), that $y' \in D_x$. Moreover, $y \not= y'$ since otherwise this  would imply that $\Theta_y$ is constant  which would contradict Lemma \ref{nonconstant}. 
Since $N$ bounds a strictly convex domain, $D_x$ intersects $N$ at at most two points which forces to have
$$D_x \cap N = \{ y, y' \}.$$
Note that $n_M(x) \cdot n_N(y) \not= 0$ since otherwise this would imply that $D_x$ is tangent to $N$ and thus intersects $N$ at only one point.  Note also that among $n_M(x) \cdot n_N(y)$, $n_M(x) \cdot n_N(y')$, one is positive and the other one is negative. It then suffices to prove that $n_M(x) \cdot n_N(y')>0$. This fact is proven in \cite{GMC}: with their notations,  $t_+(x)=y'$  satisfies the desired relation.  Since  the proof is easy  we repeat it here for sake of completeness. 
Let $x' \in \Om_y$. Then, $x' \not= x$ otherwise we would get that $\Theta_x$ is constant. 
The {\em monotonicity property} asserts that 
\begin{eqnarray} \label{monoto}
 (x-x') \cdot (y-y') \leq 0. 
\end{eqnarray}
Indeed, since $x \in \Om_{y'}$ and $x' \in \Om_y$, we have 
$$\varphi(x) + \psi(y') = c_2(x,y') \; \hbox{ and } \varphi(x') + \psi(y)= c_2(x',y).$$
Moreover, by definition of $(\varphi, \psi)$, 
$$\varphi(x) + \psi(y) \leq c_2(x,y) \; \hbox{ and } \varphi(x') + \psi(y') \leq c_2(x',y').$$
These relations imply that 
$$c_2(x',y) + c_2(x,y')  \leq c_2(x,y) + c_2(x',y').$$
Coming back to the definition of $c_2$, we obtain Relation (\ref{monoto}). 
Note that since $y,y' \in D_x$, the definition of $D_x$ tells us that  $\overrightarrow{yy'} = \lambda n_M(x)$ for some $\lambda\not=0$. If we assume that $n_M(x) \cdot n_N(y')<0$ and $n_M(x) \cdot n_N(y)>0$ then the fact that $ N $ bounds a strictly convex domain forces $\lambda<0$.  Therefore, Relation 
(\ref{monoto}) becomes $(x-x') \cdot n_M(x)   \leq 0$ which is impossible since $x \not= x'$ and since $M$ bounds a strictly convex domain. This ends the proof of Proposition \ref{main2}.
\end{proof}

Note that the preceding proof shows that 
\begin{lemma} \label{lem1}
 For all $x \in \Gamma$, $D_x \cap N$ has exactly two distinct points $y,y'$ such that 
 $x$ is a maximum for $\Theta_y$ and a minimum for $\Theta_{y'}$. Moreover 
 $$n_M(x) \cdot n_N(y) <0 \; \hbox{ and } \; n_M(x) \cdot n_N(y') >0.$$
\end{lemma}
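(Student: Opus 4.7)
The plan is to observe that this lemma is essentially a re-packaging of what was established in the proof of Proposition \ref{main2}, so the proof will mostly consist of extracting and re-organizing ingredients already at hand. First, I would exhibit the two points explicitly. By the definition of $\Gamma$ and the well-definedness of $T$ (Proposition \ref{main2}), there is a unique $y = T(x) \in N$ for which $x$ is a maximum of $\Theta_y$, and by \eqref{critical} this forces $y \in D_x$. Next, since $N$ is compact and $\Theta_x$ is continuous, $\Omega_x$ is non-empty; pick $y' \in \Omega_x$. Then by \eqref{equivom} we have $x \in \Omega_{y'}$, i.e.\ $x$ is a minimum of $\Theta_{y'}$, and \eqref{critical} again gives $y' \in D_x$.

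Second, I would show $y\neq y'$ and that these exhaust $D_x\cap N$. If $y=y'$ then $x$ would be simultaneously a maximum and a minimum of $\Theta_y$, making $\Theta_y$ constant, which is forbidden by Lemma \ref{nonconstant}. Since $N$ bounds a strictly convex domain, the line $D_x$ meets $N$ in at most two points, and we have already produced two distinct intersection points; hence $D_x \cap N = \{y, y'\}$ exactly.

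Finally, I would handle the signs of the normal inner products. Because $D_x$ cuts $N$ in two distinct points, $D_x$ cannot be tangent to $N$ at $y$ or $y'$, so neither $n_M(x)\cdot n_N(y)$ nor $n_M(x)\cdot n_N(y')$ vanishes; moreover, a line transversely crossing the boundary of a strictly convex domain enters at one point and exits at the other, so the two inner products must have opposite signs. To identify which is positive, I would reproduce the monotonicity argument already carried out in Proposition \ref{main2}: pick $x' \in \Omega_y$ with $x'\neq x$ (otherwise $\Theta_x$ would be constant), apply \eqref{monoto} with the vector $\overrightarrow{yy'}= \lambda\, n_M(x)$, and use strict convexity of the domain bounded by $M$ to conclude that $\lambda<0$ forces a contradiction with $x\neq x'$. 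This pins down $n_M(x)\cdot n_N(y)<0$ and $n_M(x)\cdot n_N(y')>0$.

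There is no real obstacle: every ingredient (existence of the maximizer via $\Gamma$, existence of the minimizer via compactness, the two-point bound via strict convexity, the transversality, and the monotonicity sign computation) is already in the text. The only thing worth doing carefully is making sure the statement is phrased symmetrically in $y$ and $y'$, since the preceding proof only spelled out the sign on the ``maximum'' side through the map $T$, whereas here we also need the ``minimum'' side, which follows immediately by opposite signs.
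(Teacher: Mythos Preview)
Your proposal is correct and mirrors the paper exactly: the paper presents Lemma \ref{lem1} as an immediate by-product of the proof of Proposition \ref{main2}, and you have faithfully extracted and organized precisely those ingredients (existence of the maximizer via $T$, existence of the minimizer via $\Omega_x$, the two-point intersection bound from strict convexity, transversality, and the monotonicity sign argument). One cosmetic slip, which the paper's own text also makes: when justifying $x'\neq x$ you should say $\Theta_y$ would be constant, not $\Theta_x$, since $x'=x$ would make $x$ simultaneously a maximum and a minimum of $\Theta_y$.
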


\begin{proposition} \label{main3} 
 The  following assertions are equivalent: 
 \begin{enumerate} 
  \item $\varphi^{\Box}$ is $C^1$;
\item $T$ is injective; 
\item for all $y \in M$, $\Theta_y$ has exactly two critical points. 
 \end{enumerate}
If one of these assertions is true, then $\Gamma = M$. 
 \end{proposition}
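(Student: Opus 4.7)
The plan is to set up the three equivalences as a cycle $(3)\Rightarrow(2)\Rightarrow(1)\Rightarrow(2)\Rightarrow(3)$, with the ``moreover'' assertion $\Gamma=M$ emerging from the $(2)\Rightarrow$ part. The implication $(3)\Rightarrow(2)$ is essentially free: since $T:\Gamma\to N$ is surjective (Proposition \ref{main2}), each $\Theta_y$ attains a maximum on $M$, and by compactness it also attains a minimum, and these two critical points are distinct by Lemma \ref{nonconstant}. If (3) holds, these are the only critical points, so the maximum is unique, i.e., $T$ is injective.

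The pivotal step is $(2)\Rightarrow\Gamma=M$, which uses topology. Assuming $T$ injective, $T:\Gamma\to N$ is a continuous bijection from a compact space ($\Gamma$ is closed in compact $M$ by Proposition \ref{main1}) onto the Hausdorff space $N$, hence a homeomorphism. Therefore $\Gamma$ is homeomorphic to the $(n-1)$-manifold $N$, and by invariance of domain applied to its inclusion in the $(n-1)$-manifold $M$, $\Gamma$ is open in $M$. Being simultaneously closed in the connected space $M$, $\Gamma=M$.

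With $\Gamma=M$ in hand, $(2)\Rightarrow(1)$ and $(2)\Rightarrow(3)$ both follow from the same ``a line meets a strictly convex hypersurface in at most two points with opposite normal signs'' idea. For $(2)\Rightarrow(1)$: since $\Omega_y^\Box=T^{-1}(y)$ is a singleton, Proposition \ref{varphibox}(5) gives differentiability of $\varphi^\Box$ at every $y$, and the formula $\nabla\varphi^\Box(y)=P_y(y-T^{-1}(y))$ derived there is continuous in $y$ because $T^{-1}$ is continuous and $P_y$ depends smoothly on $y$. For $(2)\Rightarrow(3)$: a critical point $x$ of $\Theta_y$ satisfies $y\in D_x$, so since $x\in M=\Gamma$, Lemma \ref{lem1} forces $y\in\{T(x),\tilde T(x)\}$ where $\tilde T(x)$ denotes the unique element of $\Omega_x$. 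The maxima are counted by $T^{-1}(y)$ (a singleton). For the minima, the envelope identity applied to the $C^1$ function $\psi=\varphi^*$ (Gangbo-McCann) yields $\nabla\psi(y)=P_y(y-x)$ for every $x\in\Omega_y$, so $\Omega_y\subset D_y\cap M$. Since $M$ bounds a strictly convex domain, this intersection has at most two points, at which $n_M\cdot n_N(y)$ has opposite signs; Lemma \ref{lem1} requires the positive sign for elements of $\Omega_y$, so $|\Omega_y|=1$ and $\Theta_y$ has exactly two critical points.

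The main obstacle is $(1)\Rightarrow(2)$, where $\Gamma=M$ is not yet available. The trick is to apply the same line-intersection argument, but directly to $\varphi^\Box$. Fix $y\in N$; the envelope identity (already derived inside the proof of Proposition \ref{varphibox}(5)) gives, for every $x\in\Omega_y^\Box$, the equality $\nabla\varphi^\Box(y)=P_y(y-x)$. Hence $\Omega_y^\Box$ lies on the line $D'_y:=y-\nabla\varphi^\Box(y)+\R\, n_N(y)$, which meets the strictly convex hypersurface $M$ in at most two points with opposite signs of $n_M\cdot n_N(y)$. But by Proposition \ref{main2}, any $x\in\Omega_y^\Box\subset\Gamma$ must satisfy $n_M(x)\cdot n_N(T(x))=n_M(x)\cdot n_N(y)<0$, which rules out one of the two candidates. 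Therefore $|\Omega_y^\Box|=1$ for every $y$, i.e., $T$ is injective.
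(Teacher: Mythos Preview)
Your argument is correct and shares the paper's core ingredients: the line-intersection principle for strictly convex hypersurfaces combined with the sign constraint $n_M(x)\cdot n_N(T(x))<0$ from Proposition \ref{main2}, and the topological step that a continuous bijection from compact $\Gamma$ onto $N$ is a homeomorphism, forcing $\Gamma$ to be open (hence all of $M$). The differences are organizational. The paper runs the cycle $(1)\Rightarrow(2)\Rightarrow(3)\Rightarrow(1)$; you instead prove $(3)\Rightarrow(2)$ directly (which is indeed trivial) and derive both $(1)$ and $(3)$ from $(2)$ once $\Gamma=M$ is in hand. For $(1)\Rightarrow(2)$, your route through the envelope identity $\nabla\varphi^\Box(y)=P_y(y-x)$ for every $x\in\Omega_y^\Box$ is slightly cleaner than the paper's, which reaches the same inclusion $\Omega_y^\Box\subset D_y^\Box$ via the identities $\varphi^{\Box\Box\Box}=\varphi^\Box$ and $\varphi^{\Box\Box}(x)=\varphi(x)$ on $\Gamma$ before invoking (\ref{critical2}); note however that Proposition \ref{varphibox}(5) is stated under the uniqueness hypothesis, so strictly speaking you are extracting the first-order touching argument from its proof rather than citing the statement. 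For the topological step you invoke invariance of domain while the paper cites the Jordan--Brouwer separation theorem; both inputs yield openness of $\Gamma$ in $M$ and are equivalent here.
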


\begin{proof} 
Let us show that 1) implies 2). Assume $\varphi^{\Box}$ is $C^1$. For all $y \in N$, define the right line 
$$ D^\Box_y=y-\nabla \varphi^\Box(y)+\span(n_N(y)) \; .$$ 
An straightforward computation shows that for $x \in M$, 
\begin{eqnarray} \label{critical2}
x \in D^{\Box}_y \Longleftrightarrow y \hbox{ is a critical point of } \Theta^\Box(\cdot):= c_2(x, \cdot) - \varphi^\Box(\cdot).   
\end{eqnarray}
Let $x  \in  \Gamma$. Then $x$ is a  maximum of the function $\Theta_y$ 
i.e. $\varphi^\Box(y) = \Theta_y(x)$ for some $y \in N$. Now, using the fact that  $\varphi^{\Box \Box \Box} = \varphi^{\Box}$ (see Proposition \ref{varphibox}), one  also has
\begin{eqnarray} \label{a1}
 \sup_{z \in M} (c_2(z,y) - \varphi^{\Box \Box} (z)) = \varphi^{\Box \Box \Box}(y) = \varphi^\Box(y) = \Theta_y(x).
\end{eqnarray}
On the other hand, by Proposition \ref{main1}, $\varphi(x) = \varphi^{\Box \Box}(x)$ and hence
$$\Theta_y(x) = c_2(x,y) - \varphi^{\Box \Box}(x).$$
Together with (\ref{a1}), we get that $x$ is a maximum for $z \to c_2(z,y) - \varphi^{\Box \Box} (z)$. Obviously, mimicking what was done to get (\ref{equivom}), we also have that $y$ is a maximum for the function of N
$z \to c_2(x,z) - \varphi^{\Box}(z)$. Relation (\ref{critical2}) then leads to $x \in D^\Box_y$. 
Assume now that $T(x) = T(x')$. We then obtain that $x,x' \in D^\Box_{T(x)}$. Moreover, Lemma \ref{lem1} also establishes that $n_M(x) \cdot n_N(T(x))<0$ and $n_M(x') \cdot n_N(T(x)) <0$ which forces $x$ and $x'$ to be equal since $M$ bounds a strictly convex domain. This proves that $T$ is injective. \\

\noindent Let us prove that 2) implies 3). At first, we show that under assumption 2), $\Gamma = M$.  From Propositions \ref{main1} and \ref{main2}, $T: \Gamma \to M$ is now bijective, continuous. Since $\Gamma$ is compact, it sends closed sets on closed sets and thus $T$ is actually a homeomorphism. This ensures that $\Gamma=M$. Indeed, $M$ and $N$ bound some convex domains in $\R^n$ and are then diffeomorphic to $S^n$. We just proved that $\Gamma$ is a closed set of $M$ homeomorphic to $N$ and hence to $S^n$. To prove that $\Gamma=M$, it suffices to notice that it is open in $M$ and to conclude by the fact that $M$ is connected.   
This follows from  the Jordan-Brouwer separation theorem (see for instance
  \cite{GH}, Corollay (18.9) Page 110). \\

 \noindent We are now in position to prove 3). A consequence of Lemma \ref{lem1} and the fact that $\Gamma = M$ is that for all $(x,y) \in M \times N$ such that $x \in \Om_y$ then 
 \begin{eqnarray} \label{a2}
  n_M(x) \cdot n_N(y) <0.
 \end{eqnarray}
We already noticed that each $\Om_x$ ($x \in M$) is reduced to a point (this comes from Proposition \ref{main1} and the fact that $\Gamma=M$) but this is also true for $\Om_y$ for any $y \in N$. Indeed, if $x,z$ are some minima for $\Theta_y$, they must belong to the right line $D_y$ and they must satisfy (\ref{a2}) which is only possible if $x=z$.  So, let $y \in N$ and  let $x$ be a minimum of $\Theta_y$ and $x'$ be a maximum of $\Theta_y$. Assume that $\Theta_y$ has some other critical point $x''$. Then, $y \in D_{x''}$. By Lemma \ref{lem1}, $x''$ must be a maximum or a minimum of $\Theta_y$. The argument above tells us that $x''$ cannot be a minimum. But $x''$ cannot be either a maximum: it would imply $T(x) = T(x'')$ which is impossible since we assumed $T$ to be injective.  This proves that the only critical points of $\Theta_y$ are $x,x'$.  \\

\noindent Finally, we prove that 3) implies 1). Assume that $\Theta_y$ has only two critical points for any $y \in N$. Then, for all $y$, $\Om^\Box_y$ is reduced to one point (otherwise, $\Theta_y$ has at least two maxima and one minimum). From Point 5) of Proposition \ref{varphibox}, we obtain that $\varphi^\Box$ is differentiable on $N$. It remains to prove that its differential is continuous. Let $(y_k)$ be a sequence of points in $N$ tending to some $y$. Let $x_k \in \Om^\Box_{y_k}$ and $x \in \Om^\Box_y$. Let $x'$ be an adherence point of $(x_k)$.  Since $x_k$ is a maximum of $z \to c_2(z,y_k) - \varphi(z)$,   passing to the limit, $x'$ is a maximum of  $z \to c_2(z,y) - \varphi(z)$. This implies that $x' \in \Om^\Box_y$, which is reduced to one point. Hence $x'=x$ which shows that $x_k$ tends to $x$.  In particular, the sequence of right lines $(D^\Box_{y_k})$  (which are orthogonal to the tangent spaces $T_{y_k} N$ and which are such that $x_k \in  D^\Box_{y_k}$)  has a limit position which is $D^\Box_y$. The definition of these right lines gives  the continuity at $y$ of the differential of $\varphi^\Box$. This ends the proof of Proposition \ref{main3}. 
\end{proof}

\noindent Theorem \ref{main} is a direct consequence of Propositions \ref{main1}, \ref{main2} and \ref{main3}.  

\thebibliography{ww}

\bibitem{AKMC}{N. Ahmad, H. K. Kim and R.J. McCann, {\it Optimal transportation, topology and
uniqueness}, Bull. Math. Sci. {\bf 1} (2011) 13-32}



\bibitem{CMCN}{P.A. Chiappori, R.J. McCann and L.P. Nesheim, {\it Hedonic price equilibria, stable matching, and opitmal transport: equivalence, topology and uniqueness}, Econom. Theory {\bf 42} (2010), 317-354.}

\bibitem{GMC}{W. Gangbo and R.J. McCann, {\it  Shape recognition via Wasserstein distance}, Quart. Appl. Math. {\bf 58}, (2000)  705-737. }

\bibitem{GH}{M.J. Greenberg and J.R. Harper, {\it  Algebraic Topology, a first course}, Mathematic Lecture Note Series, Benjamin/Cummings Publishing Co., Inc., Advanced Book Program, Reading, Mass., 1981}

 \bibitem{K}{L. Kantorovich, {\it On the translocations of masses}, C.R. (Doklady) Acad. Sci.  URSS (N.S) {\bf 37} (1942), 199-201}
 
 \bibitem{V}{C. Villani, {\it Topics on transportations}, AMS, Graduate Studies in Mathematics {\bf 58},  2003.}
\endthebibliography

\end{document}